\let\pa\partial
\let\na\nabla
\let\eps\varepsilon
\newcommand{\N}{{\mathbb N}}
\newcommand{\R}{{\mathbb R}}
\newcommand{\diver}{\operatorname{div}}
\newcommand{\dom}{\mathcal{D}}
\newtheorem{theorem}{Theorem}
\newtheorem{lemma}[theorem]{Lemma}
\newtheorem{remark}[theorem]{Remark}
\begin{document}

\title[Large-time asymptotics]{Large-time asymptotics for degenerate \\
cross-diffusion population models \\
with volume filling}

\author[X. Chen]{Xiuqing Chen}
\address{School of Mathematics (Zhuhai), Sun Yat-Sen University, Zhuhai 519082,
Guang\-dong Province, China}
\email{chenxiuqing@mail.sysu.edu.cn}

\author[A. J\"ungel]{Ansgar J\"ungel}
\address{Institute of Analysis and Scientific Computing, Technische Universit\"at Wien,
Wiedner Hauptstra\ss e 8--10, 1040 Wien, Austria}
\email{juengel@tuwien.ac.at}

\author[X. Lin]{Xi Lin}
\address{Department of Mathematics and Physics, Guangzhou Maritime University, Guangzhou 510765,
Guang\-dong Province, China}
\email{linxi@gzmtu.edu.cn}

\author[L. Liu]{Ling Liu}
\address{School of Mathematics (Zhuhai), Sun Yat-Sen University, Zhuhai 519082,
Guang\-dong Province, China}
\email{liuling55@mail2.sysu.edu.cn}

\date{\today}

\thanks{The first, third, and fourth authors acknowledge support from the National Natural
Science Foundation of China (NSFC), grant 11971072.
The second author acknowledges partial support from the Austrian Science Fund (FWF), grants P33010 and F65. This work has received funding from the European Research Council (ERC) under the European Union's Horizon 2020 research and innovation programme, ERC Advanced Grant no.~101018153.}

\begin{abstract}
The large-time asymptotics of the solutions to a class of degenerate parabolic cross-diffusion systems is analyzed. The equations model the interaction of an arbitrary number of population species in a bounded domain with no-flux boundary conditions. Compared to previous works, we allow for different diffusivities and degenerate nonlinearities. The proof is based on the relative entropy method, but in contrast to usual arguments, the relative entropy and entropy production are not directly related by a logarithmic Sobolev inequality. The key idea is to apply convex Sobolev inequalities to modified entropy densities including ``iterated'' degenerated functions.
\end{abstract}

% \paragraph{Keywords:}
\keywords{Degenerate parabolic equations, cross-diffusion systems, entropy method, large-time asymptotics.}

% \paragraph{AMS classification:}
\subjclass[2000]{35K51, 35K59, 35K65, 35Q92, 92D25.}

\maketitle

%%%%%%%%%%%%%%%%%%%%%%%%%%%%%%%%%%%%%%%%%%%%%%%%%%%%%%%%%%%%%%%

\section{Introduction}

The aim of this note is to extend the large-time asymptotics result of \cite{ZaJu17} on multi-species cross-diffusion systems with volume-filling effects to the degenerate case. Such systems describe, for instance, the spatial segregation of population species \cite{SKT79}, chemotactic cell migration in tissues \cite{Pai09}, motility of biological cells \cite{SLH09}, or ion transport in fluid mixtures \cite{BSW12}. The main difficulties of the cross-diffusion systems are the lack of positive semidefiniteness of the diffusion matrix and the nonstandard degeneracies. The first issue was overcome by applying the boundedness-by-entropy method \cite{Jue15}, which exploits the underlying entropy (or formal gradient-flow) structure. This allows for both a global existence analysis and the proof of lower and upper bounds, without the use of a maximum principle. The second issue was handled by extending the Aubin--Lions compactness lemma \cite{ZaJu17}. However, the large-time asymptotics in \cite{ZaJu17} only holds if the problem is not degenerate. In the present note, we remove this restriction.

The evolution of the volume fraction $u_i(x,t)$ of the $i$th species is given by
\begin{align}\label{1.eq}
  & \pa_t u_i = \diver\sum_{j=1}^n A_{ij}(u)\na u_j\quad\mbox{in }\Omega,\ t>0,\ i=1,\ldots,n \\
	& \sum_{j=1}^n A_{ij}(u)\na u_j\cdot\nu = 0\quad\mbox{on }\pa\Omega, \quad
	u_i(\cdot,0) = u_i^0\quad\mbox{in }\Omega, \label{1.bic}
\end{align}
where $u_0=1-\sum_{i=1}^n u_i$ is the solvent volume fraction or the proportion of unoccupied space (depending on the application), $\Omega\subset\R^d$ ($d\ge 1$) is a bounded domain with Lipschitz boundary, $\nu$ is the exterior unit normal vector to $\pa\Omega$, and the diffusion coefficients are given by
\begin{equation}\label{1.A}
  A_{ij}(u) = D_ip_i(u)q(u_0)\delta_{ij} + D_iu_ip_i(u)q'(u_0)
	+ D_iu_i q(u_0)\frac{\pa p_i}{\pa u_j}(u),
\end{equation}
where $i,j=1,\ldots,n$, $u=(u_1,\ldots,u_n)$ is the solution vector, $D_i>0$ are the diffusivities, $\delta_{ij}$ denotes the Kronecker symbol, and $p_i$ and $q$ are smooth functions. In particular, the bounds $0\le u_i\le 1$ should hold for all $i=0,\ldots,n$. The boundary condition in \eqref{1.bic} means that the physical or biological system is isolated.
We note that equations \eqref{1.eq} and \eqref{1.A} can be written as
\begin{align}\label{1.eq2}
  \pa_t u_i = D_i\diver\bigg(u_ip_i(u)q(u_0)\na\log\frac{u_ip_i(u)}{q(u_0)}\bigg)
  = D_i\diver\bigg(q(u_0)^2\na{\frac{u_ip_i(u)}{q(u_0)}}
  \bigg).
\end{align}%
In some applications, drift or reaction terms need to be added; see, e.g., \cite{BDPS10,GeJu18} for systems with drift terms and \cite{DJT20} for reaction rates.

Equations \eqref{1.eq} and \eqref{1.A} can be formally derived from a random-walk lattice model in the diffusion limit \cite[Appendix A]{ZaJu17}. The functions $p_i$ and $q$ are related to the transition rates of the lattice model with $p_i$ measuring the occupancy and $q$ measuring the non-occupancy. This class of systems contains the population model of Shigesada, Kawasaki, and Teramoto \cite{SKT79} (if $p_i$ is a linear function and $q=1$) and Nernst--Planck-type equations accounting for finite ion sizes (if $p_i=1$ and $q(u_0)=u_0$; see \cite{GeJu18}).
In this note, we consider the degenerate case $q'(0)=0$ and assume that there exists a smooth function $\chi$ such that $p_i=\exp(\pa\chi/\pa u_i)$ to guarantee an entropy structure via the entropy density
\begin{equation}\label{1.h}
  h(u) = \sum_{i=1}^n (u_i(\log u_i-1)+1) + \int_1^{u_0}\log q(s)ds + \chi(u),
\end{equation}
where $u\in\dom:=\{u\in(0,1)^n:\sum_{i=1}^n u_i<1\}$.

There exist other approaches to model volume filling. The finite particle size may be taken into account by adding cross-diffusion terms of the type $u_i\na\sum_{j=1}^n b_{ij}u_j$ to the standard Nernst--Planck flux \cite{Hsi19} or by using the Bikerman-type flux $J_i=-D_i(\na u_i-u_i\na\log u_0)$ in the mass conservation equation $\pa_t u_i+\diver J_i=0$ \cite{Bik42}.

The global existence of bounded weak solutions to \eqref{1.eq}--\eqref{1.A} was shown in \cite[Theorem 1]{ZaJu17} assuming $D_i=1$ for $i=1,\ldots,n$ and the following conditions:
\begin{itemize}
\item[\bf (H1)] Domain: $\Omega\subset\R^d$ ($d\ge 1$) is a bounded convex domain with Lipschitz boundary, $T>0$. Set $\dom=\{u\in(0,1)^n:\sum_{i=1}^n u_i<1\}$ and $\Omega_T=\Omega\times(0,T)$.
\item[\bf (H2)] Initial datum: $u^0(x)\in\dom$ for a.e.\ $x\in\Omega$ and $h(u^0)\in L^1(\Omega)$.
\item[\bf (H3)] Functions $p_i$: $p_i=\exp(\pa\chi/\pa u_i)$, where $\chi\in C^3(\overline{\dom})$ is convex.
\item[\bf (H4)] Function $q$: $q\in C^3([0,1])$ satisfies $q(0)=0$, $q(1)=1$, $q'(0)\ge 0$ and $q(s)>0$, $q'(s)>0$ for all $0<s\le 1$.
\end{itemize}

The convexity of $\Omega$ in Hypothesis (H1) is used for the convex Sobolev inequality; see Lemma \ref{lem.csi} below. For generalized Nernst--Planck systems with $p_i=\mbox{const.}$, we may choose $\chi(u)=\sum_{i=1}^n u_i$, which satisfies Hypothesis (H3). Moreover, if $p_i(u)=P_i(u_i)$ for some functions $P_i:[0,1]\to[0,\infty)$, condition $p_i=\exp(\pa\chi/\pa u_i)$ is satisfied with $\chi(u)=\sum_{i=1}^n\chi_i(u_i)$ and $\chi_i(s)=\int_0^s\log P_i(\tau)d\tau$. The functions $q(s)=s^\alpha$ with $\alpha\ge 1$ satisfy Hypothesis (H4).

We claim that the existence result also holds for arbitrary $D_i>0$. Indeed, it is sufficient to define $\widetilde\chi(u)=\chi(u)+\sum_{j=1}^n u_j\log D_j$, since $\exp(\pa\widetilde\chi/\pa u_i)=D_i\exp(\pa\chi/\pa u_i)=D_ip_i$, and we can apply Theorem 1 in \cite{ZaJu17} with $\widetilde\chi$. We observe that the condition $q'(s)/q(s)\ge c_1>0$ in \cite{ZaJu17} is not needed for the existence analysis.

The weak solution $u=(u_1,\ldots,u_n)$ to \eqref{1.eq}--\eqref{1.A} satisfies $u(x,t)\in\dom$ for a.e.\ $(x,t)\in\Omega_T$, mass conservation, the regularity
\begin{align*}
  & \sqrt{q(u_0)},\ \sqrt{q(u_0)u_i}\in L^2(0,T;H^1(\Omega)), \quad
	\sqrt{q(u_0)}\na u_i\in L^2(\Omega_T), \\
	& \pa_t u_i\in L^2(0,T;H^1(\Omega)') \quad\mbox{for }i=,1\ldots,n,
\end{align*}
and the weak formulation
\begin{align*}
  \int_0^T\langle\pa_t u_i,\phi_i\rangle dt
	= -\int_0^T\int_\Omega D_i \sqrt{q(u_0)}\big[\na\big(u_ip_i(u)\sqrt{q(u_0)}\big)
	- 3u_ip_i(u)\na\sqrt{q(u_0)}\big]\cdot\na\phi_i dxdt
\end{align*}
for all $\phi_i\in L^2(0,T;H^1(\Omega))$, $i=1,\ldots,n$, where $\langle\cdot,\cdot\rangle$ denotes the duality product of $H^1(\Omega)'$ and $H^1(\Omega)$. Moreover, the initial datum in \eqref{1.bic} is satisfied in the sense of $H^1(\Omega)'$ and the entropy inequality
\begin{align}\label{1.ei}
  \int_\Omega h(u(t))dx
	+ c_0\int_s^t\int_\Omega\bigg(q(u_0)\sum_{i=1}^n|\na\sqrt{u_i}|^2
	+ |\na\sqrt{q(u_0)}|^2\bigg)dxdr \le \int_\Omega h(u(s))dx,
\end{align}
holds for $0\le s<t$, $t>0$ for some $c_0>0$ depending on $D_i$, $p_i$, and $q$, recalling definition \eqref{1.h} of $h(u)$. The $L^\infty(\Omega_T)$ bound for $u_i$ and the $L^2(\Omega_T)$ for $\sqrt{q(u_0)}\na u_i$ imply that $\na(u_ip_i(u)\sqrt{q(u_0)})\in L^2(\Omega_T)$, so that the weak formulation is well defined.

Our main result is the convergence of the solutions to \eqref{1.eq}--\eqref{1.A} towards the constant steady state
$$
  u_i^\infty = \frac{1}{|\Omega|}\int_\Omega u_i^0dx
  \quad\mbox{for }i=1,\ldots,n,
  \quad u_0^\infty= 1 - \sum_{i=1}^n u_i^\infty
$$
for large times under the following additional hypothesis:
\begin{itemize}
\item[\bf (H5)] $q$ is convex, $q/q'$ is concave, and there exist $\beta\in[0,1]$, $c_1>0$ such that
$$
  \lim_{s\to 0} \frac{s^\beta q'(s)}{q(s)}=c_1>0.
$$
\end{itemize}
Examples of functions satisfying Hypothesis (H5) are $q(s)=s^\alpha$ with $\alpha\ge 1$.
The convergence (with exponential decay rate) was proved in \cite{ZaJu17} for the nondegenerate case $q'(0)>0$ only. In the degenerate situation $q'(0)=0$, the numerical results of \cite{GeJu18} indicate that exponential rates cannot be expected. Therefore, we show the convergence without rate.

\begin{theorem}[Large-time asymptotics]\label{thm.time}
Let Hypotheses (H1)--(H5) hold and let $u=(u_1,$ $\ldots,u_n)$ be a weak solution to \eqref{1.eq}--\eqref{1.A} satisfying the entropy inequality \eqref{1.ei}. Then $u_i(t)\to u_i^\infty$ strongly in $L^p(\Omega)$ as $t\to\infty$ for all $i=1,\ldots,n$ and $1\le p<\infty$.
\end{theorem}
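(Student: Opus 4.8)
The plan is a compactness argument at well-chosen times, combined with the monotonicity of the relative entropy and a Csisz\'ar--Kullback--Pinsker inequality. Set
$$ H[u\,|\,u^\infty] = \int_\Omega\big(h(u) - h(u^\infty) - h'(u^\infty)\cdot(u-u^\infty)\big)\,dx . $$
Since $u^\infty$ is constant and $\int_\Omega u_i(t)\,dx = \int_\Omega u_i^0\,dx = |\Omega|\,u_i^\infty$ by mass conservation, the linear term integrates to zero, so $H[u(t)\,|\,u^\infty] = \int_\Omega h(u(t))\,dx - \int_\Omega h(u^\infty)\,dx$; by \eqref{1.ei} this is nonincreasing in $t$, and by convexity of $h$ and Jensen's inequality it is nonnegative. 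Hence $H[u(t)\,|\,u^\infty]$ decreases to some $H_\infty\ge 0$, and the task reduces to showing $H_\infty=0$ and then transferring this to $L^p$ convergence.

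The first step is to choose good times. Letting $s,t\to\infty$ in \eqref{1.ei} shows that $\int_s^t\int_\Omega(q(u_0)\sum_i|\na\sqrt{u_i}|^2 + |\na\sqrt{q(u_0)}|^2)\,dx\,dr\to 0$, so there is a sequence $t_k\to\infty$ with $\|\na\sqrt{q(u_0(t_k))}\|_{L^2(\Omega)}\to 0$ and $\int_\Omega q(u_0(t_k))|\na\sqrt{u_i(t_k)}|^2\,dx\to 0$ for every $i$. Writing $\na\sqrt{q(u_0)u_i} = \sqrt{u_i}\,\na\sqrt{q(u_0)} + \sqrt{q(u_0)}\,\na\sqrt{u_i}$ and using $0\le u_i\le 1$, we also get $\|\na\sqrt{q(u_0(t_k))u_i(t_k)}\|_{L^2(\Omega)}\to 0$. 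Both $\sqrt{q(u_0(t_k))}$ and $\sqrt{q(u_0(t_k))u_i(t_k)}$ are bounded in $L^\infty(\Omega)$, so the Poincar\'e--Wirtinger inequality on the bounded domain $\Omega$ (a convex Sobolev inequality, cf.\ Lemma~\ref{lem.csi}) implies that, along a subsequence, $\sqrt{q(u_0(t_k))}\to\gamma$ and $\sqrt{q(u_0(t_k))u_i(t_k)}\to c_i$ both in $L^2(\Omega)$ and a.e., for some constants $\gamma,c_i\in[0,1]$.

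The crux — and where the degeneracy must be handled — is to show $\gamma>0$ and to recover $u$ itself. If $\gamma=0$, then $q(u_0(t_k))\to 0$ a.e., hence $u_0(t_k)=q^{-1}(q(u_0(t_k)))\to 0$ a.e.\ because $q\colon[0,1]\to[0,1]$ is a homeomorphism under (H4); dominated convergence then contradicts $\int_\Omega u_0(t_k)\,dx=\int_\Omega u_0^0\,dx>0$, the latter holding since $u^0\in\dom$ a.e. So $\gamma>0$, whence $u_0(t_k)\to q^{-1}(\gamma^2)=:u_0^*\in(0,1]$ a.e.\ and $u_i(t_k)=\big(\sqrt{q(u_0(t_k))u_i(t_k)}/\sqrt{q(u_0(t_k))}\big)^2\to (c_i/\gamma)^2=:u_i^*$ a.e.; the uniform bounds $0\le u_i,u_0\le 1$ promote these to convergence in every $L^p(\Omega)$, $p<\infty$, and mass conservation identifies $u_0^*=u_0^\infty$ and $u_i^*=u_i^\infty$. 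Since $h$ is continuous and bounded on $\overline{\dom}$ — Hypothesis (H5) ensures $\log q$ is integrable near $0$, so $\int_1^{u_0}\log q(s)\,ds$ stays bounded as $u_0\to 0^+$ — dominated convergence gives $\int_\Omega h(u(t_k))\,dx\to\int_\Omega h(u^\infty)\,dx$, i.e.\ $H[u(t_k)\,|\,u^\infty]\to 0$. By monotonicity, $H_\infty=0$, so $H[u(t)\,|\,u^\infty]\to 0$ as $t\to\infty$.

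Finally, entropy decay is turned into strong convergence. Dropping the nonnegative $\chi$- and $q$-contributions to the relative entropy density, $H[u(t)\,|\,u^\infty]\ge\sum_{i=1}^n\int_\Omega\big(u_i(t)\log(u_i(t)/u_i^\infty)-u_i(t)+u_i^\infty\big)\,dx$, and since $\int_\Omega u_i(t)\,dx=\int_\Omega u_i^\infty\,dx$, the Csisz\'ar--Kullback--Pinsker inequality gives $H[u(t)\,|\,u^\infty]\ge c\sum_i\|u_i(t)-u_i^\infty\|_{L^1(\Omega)}^2$ with some $c>0$. Hence $u_i(t)\to u_i^\infty$ in $L^1(\Omega)$, and interpolation with the bound $0\le u_i\le 1$ upgrades this to $L^p(\Omega)$ for all $1\le p<\infty$. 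I expect the main obstacle to be precisely the degenerate step in the third paragraph: the dissipation controls only $\na\sqrt{q(u_0)}$ and $\na\sqrt{q(u_0)u_i}$ rather than $\na u_i$ directly, so one must argue through these ``iterated'' degenerate composites and, crucially, rule out the degenerate limit $\gamma=0$ using mass conservation; verifying that $h$ remains finite on the degenerate set $\{u_0=0\}$, which is exactly where (H5) enters, also requires care.
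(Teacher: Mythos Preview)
Your argument is correct and takes a genuinely different, more elementary route than the paper. The paper works entropically throughout: it applies the logarithmic Sobolev inequality to $q(u_0)u_i$ to control an auxiliary functional $f_1$, applies the full convex Sobolev inequality (Lemma~\ref{lem.csi}) to an ``iterated'' functional $f_2$ built from $q\circ q$ so as to bound it by $\|\na\sqrt{q(u_0)}\|_{L^2}^2$ without the factor $1/q'(u_0)$ that spoils the nondegenerate argument, and then proves a separate key lemma (Lemma~\ref{lem.key}) asymptotically identifying $f_1/\bar q$ with the Boltzmann part $h_1^*$ of the relative entropy. You bypass all of this: Poincar\'e--Wirtinger alone forces $\sqrt{q(u_0(t_k))}$ and $\sqrt{q(u_0(t_k))u_i(t_k)}$ to converge to constants, and the single observation that mass conservation of $u_0$ excludes the degenerate limit $\gamma=0$ lets you recover a.e.\ convergence of $u_i(t_k)$ by division. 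In fact your proof uses strictly less structure: only the limit condition in (H5) is needed (to bound $h$ on $\overline\dom$ and invoke dominated convergence), not the convexity of $q$ nor the concavity of $q/q'$, both of which the paper's route requires (for $\bar q\ge q(u_0^\infty)$ via Jensen, and for Lemma~\ref{lem.csi}, respectively); and since Poincar\'e--Wirtinger holds on any bounded Lipschitz domain, the convexity of $\Omega$ in (H1) could be dropped as well. The paper's more elaborate machinery is designed to relate entropy production and relative entropy quantitatively, which would matter if one sought decay rates --- but since the degenerate case admits none, your compactness argument loses nothing and gains simplicity.
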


The idea of the proof is to exploit, as in \cite{ZaJu17}, the relative entropy density (or Bregman distance)
\begin{equation}\label{1.hstar}
  h^*(u|u^\infty) = h(u) - h(u^\infty) - h'(u^\infty)\cdot(u-u^\infty),
\end{equation}
where $u=(u_1,\ldots,u_n)$ is the weak solution to \eqref{1.eq}--\eqref{1.A}. The entropy inequality implies that
$$
  \frac{dh^*}{dt}(u|u^\infty)
	+ \frac{c_0}{2}\int_\Omega\sum_{i=1}^n|\na\sqrt{q(u_0)u_i}|^2 dx \le 0.
$$
Unfortunately, the entropy production integral cannot be estimated in terms of the relative entropy directly by applying a logarithmic Sobolev inequality to $u_i$. We overcome this issue by using two ideas.

First, we apply the logarithmic Sobolev inequality to $\sqrt{q(u_0)u_i}$,
$$
  \int_\Omega q(u_0)u_i\log\frac{q(u_0)u_i}{|\Omega|^{-1}\int_\Omega q(u_0)u_idx}dx
	\le C\int_\Omega|\na\sqrt{q(u_0)u_i}|^2 dx.
$$
The idea is to relate the integrand of the left-hand side to the relative entropy part
$h_1^*(u|u^\infty)=\sum_{i=1}^n(u_i\log(u_i/u_i^\infty)-u_i+u_i^\infty)dx$. For this, we define
$$
  f_1(u) = \sum_{i=1}^n\bigg(q(u_0)u_i\log\frac{q(u_0)u_i}{|\Omega|^{-1}\int_\Omega q(u_0)u_idx}
	- q(u_0)u_i + \frac{1}{|\Omega|}\int_\Omega q(u_0)u_idx\bigg).
$$
Since $\int_0^\infty\int_\Omega|\na\sqrt{q(u_0)u_i}|^2 dxdt<\infty$, we also have
$\int_0^\infty\int_\Omega f_1(u)dxdt<\infty$, and there exists a subsequence $t_k\to\infty$
such that $f_1(u(t_k))\to 0$. The key result is the limit (see Lemma \ref{lem.key})
$$
  \lim_{t_k\to\infty}\bigg(\frac{f_1(u(t_k))}{|\Omega|^{-1}\int_\Omega q(u_0(t_k))dx}
	- h_1^*(u(t_k)|u^\infty)\bigg) = 0.
$$
This result shows that $h_1^*(u(t_k)|u^\infty)\to 0$ as $t_k\to\infty$.

Second, instead of the part $h_2^*(u|u^\infty)=\int_{u_0^\infty}^{u_0}\log(q(s)/q(u_0^\infty))ds$ of the relative entropy density, we analyze the function
$$
  f_2(u_0) = \int_{\bar{q}}^{q(u_0)}\log\frac{q(s)}{q(\bar{q})}ds,
$$
where $\bar{q}:=|\Omega|^{-1}\int_\Omega q(u_0)dx$, which can be seen as an ``iterated'' version of $h_2^*(u|u^\infty)$, since it involves $q\circ q$ instead of $q$. Then an application of the convex Sobolev inequality yields a bound for the integral over $|\na\sqrt{q(u_0)}|^2$ without the need of condition $q'(0)>0$; see Remark \ref{rem.f2} for details. It follows from $\int_0^\infty\int_\Omega|\na\sqrt{q(u_0)}|^2 dxdt<\infty$ that $\int_0^\infty\int_\Omega f_2(u)dxdt<\infty$, and there exists a subsequence $t_k\to\infty$ such that $f_2(u(t_k))\to 0$.

The convergences $f_1(u(t_k))\to 0$ and $f_2(u(t_k))\to 0$ as well as the monotonicity of the entropy imply that $h^*(u(t_k)|u^\infty) \to 0$ pointwise. The monotonicity of $t\mapsto \int_\Omega h^*(u(t)|u^\infty)dx$ then implies the convergence for all sequences $t\to\infty$ and finally $u_i(t)\to u_i^\infty$ strongly in $L^2(\Omega)$.

To conclude the introduction, we mention some results on the large-time asymptotics for diffusion systems. Exponential equilibration rates in $L^p(\Omega)$ norms were shown for reaction-diffusion systems in \cite{DeFe06,DFM08}, for electro-reaction-diffusion systems in \cite{GlHu97}, and for Maxwell--Stefan systems for chemically reacting fluids in \cite{DJT20,JuSt13}. The convergence to equilibrium was proved for Shigesada--Kawasaki--Teramoto cross-diffusion systems without rate in \cite{Shi06}, for instance. All these results concern nondegenerate diffusion equations. The work \cite{BDPS10} is concerned with the large-time asymptotics for systems like \eqref{1.eq} with $D_i=p_i=1$ and $q(u_0)=u_0$ without rate.
The asymptotics for solutions to Poisson--Nernst--Planck-type equations with quadratic nonlinearity was investigated in \cite{Zin16} using Wasserstein techniques. Decay rates for degenerate diffusion systems without cross-diffusion terms were derived in \cite{CJMTU01}.
An extension of our results to cross-diffusion systems with drift or reactions seems delicate; see Remark \ref{rem.drift} for drift terms and \cite{DJT20} for cross-diffusion systems with reversible reactions.

%%%%%%%%%%%%%%%%%%%%%%%%%%%%%%%%%%%%%%%%%%%%%%%%%%%%%%%%%%%%%%%

\section{Proof of Theorem \ref{thm.time}}%\label{sec.time}

We first recall the convex Sobolev inequality; see \cite[Lemma 11]{ZaJu17}.

\begin{lemma}[Convex Sobolev inequality]\label{lem.csi}
Let $\Omega\subset\R^d$ ($d\ge 1$) be a convex domain and let $g\in C^4(\R)$ be convex such that $1/g''$ is concave. Then there exists $C_S>0$ such that for all $v\in L^1(\Omega)$ such that $g(v)$, $g''(v)|\na u|^2\in L^1(\Omega)$,
$$
  \frac{1}{|\Omega|}\int_\Omega g(v)dx - g\bigg(\frac{1}{|\Omega|}\int_\Omega vdx\bigg)
	\le \frac{C_S}{|\Omega|}\int_\Omega g''(v)|\na v|^2 dx.
$$
\end{lemma}

The logarithmic Sobolev inequality is obtained for the choice $g(v)=v(\log v-1)+1$:
\begin{equation}\label{3.lsi}
  \int_\Omega v\log\frac{v}{|\Omega|^{-1}\int_\Omega vdx}dx
	\le 4C_S\int_\Omega|\na\sqrt{v}|^2 dx
\end{equation}
and for functions $\sqrt{v}\in H^1(\Omega)$.

Since $h(u^\infty)$ is independent of time (because of mass conservation), the entropy inequality \eqref{1.ei} implies the relative entropy inequality
\begin{align}\label{3.ei}
  \int_\Omega h^*(u(t)|u^\infty)dx &+ c_0\int_s^t\int_\Omega
	\bigg(q(u_0)\sum_{i=1}^n|\na\sqrt{u_i}|^2 + |\na\sqrt{q(u_0)}|^2\bigg)dxdr \\
	&\le \int_\Omega h^*(u(s)|u^\infty)dx \nonumber
\end{align}
for $0\le s<t$ and $t>0$, where $h^*(u|u^\infty)$ is defined in \eqref{1.hstar}. As mentioned in the introduction, we cannot apply the logarithmic Sobolev inequality \eqref{3.lsi} with $v=u_i$ since $q(u_0)=0$ for $u_0=0$. Instead we apply this inequality to $v=q(u_0)u_i$.

We split the relative entropy density $h^*$ into three parts, $h^*=h_1^* + h_2^* + h_3^*$, where
\begin{align*}
  h_1^*(u|u^\infty) &= \sum_{i=1}^n\bigg(u_i\log\frac{u_i}{u_i^\infty}
	- u_i + u_i^\infty\bigg), \\
	h_2^*(u|u^\infty) &= \int_{u_0^\infty}^{u_0}\log\frac{q(s)}{q(u_0^\infty)}ds, \\
	h_3^*(u|u^\infty) &= \chi(u) - \chi(u^\infty) - \sum_{i=1}^n(u_i-u_i^\infty)\log p_i(u^\infty),
\end{align*}
where $\chi$ is introduced in Hypothesis (H3).

\begin{lemma}
The functions $h_i^*(\cdot|u^\infty)$, $i=1,2,3$, are nonnegative and bounded on $\overline\dom$.
\end{lemma}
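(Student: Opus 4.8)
The plan is to establish nonnegativity of each $h_i^*$ from convexity of the underlying generating function, and then bound each from above by exploiting continuity on the compact closure $\overline{\dom}$, being careful at the boundary where some components vanish.

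First I would treat $h_1^*$ and $h_2^*$, which are of the form $\phi(a)-\phi(b)-\phi'(b)(a-b)$ (a Bregman distance) for a convex scalar function $\phi$. For $h_1^*$, the relevant function is $\phi(s)=s(\log s-1)+1$ (applied to each $u_i$ with reference point $u_i^\infty$); its second derivative $1/s>0$ on $(0,1]$, so each summand is nonnegative, hence $h_1^*\ge 0$. For $h_2^*$, write $h_2^*(u|u^\infty)=\Psi(u_0)-\Psi(u_0^\infty)-\Psi'(u_0^\infty)(u_0-u_0^\infty)$ with $\Psi(t)=\int_1^t\log q(s)\,ds$; then $\Psi'(t)=\log q(t)$ and $\Psi''(t)=q'(t)/q(t)>0$ for $t\in(0,1]$ by Hypothesis (H4), so $\Psi$ is convex and $h_2^*\ge 0$. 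For $h_3^*$, I would use that $\chi$ is convex by Hypothesis (H3) together with $\partial\chi/\partial u_i=\log p_i$, so that $h_3^*(u|u^\infty)=\chi(u)-\chi(u^\infty)-\nabla\chi(u^\infty)\cdot(u-u^\infty)\ge 0$ is again a Bregman distance of a convex function. This also re-derives, summing the three, the nonnegativity of $h^*$ itself.

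For the upper bounds, the main point is boundedness up to the boundary of $\dom$. Since $\chi\in C^3(\overline{\dom})$, the function $h_3^*$ is continuous on the compact set $\overline{\dom}$, hence bounded. For $h_1^*$, each summand $u_i(\log u_i-1)+1 - (u_i\log u_i^\infty - u_i^\infty)$ extends continuously to $u_i=0$ (where it equals $1+u_i^\infty>0$) and to $u_i=1$, and $u_i^\infty=|\Omega|^{-1}\int_\Omega u_i^0\,dx\in(0,1)$ is a fixed interior point by Hypothesis (H2), so each summand is a bounded continuous function of $u_i\in[0,1]$; summing over $i$ gives boundedness of $h_1^*$ on $\overline{\dom}$. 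For $h_2^*$ the only delicate region is $u_0\to 0$: here I would estimate $|h_2^*|\le \big|\int_{u_0^\infty}^{u_0}|\log q(s)|\,ds\big| + |\log q(u_0^\infty)|\,|u_0-u_0^\infty|$ and observe that $\log q(s)\to-\infty$ as $s\to 0^+$ but only logarithmically relative to a power — more precisely, Hypothesis (H5) gives $q'(s)/q(s)\sim c_1 s^{-\beta}$ near $0$, so $\log q(s) = \log q(u_0^\infty) - \int_s^{u_0^\infty} q'(\sigma)/q(\sigma)\,d\sigma$ behaves like $O(s^{1-\beta})$ if $\beta<1$ or $O(\log s)$ if $\beta=1$, and in either case $\int_0^{u_0^\infty}|\log q(s)|\,ds<\infty$. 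Hence $h_2^*$ is bounded (in fact continuous) on $\overline{\dom}$.

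I expect the main obstacle to be the $u_0\to 0$ corner of $h_2^*$: one must verify that the singularity of $\log q$ at the origin is integrable, which is where Hypothesis (H5) (or already the weaker structure in (H4) giving $q$ Hölder-type behavior near $0$) must be invoked. Everything else is a routine continuity-and-compactness argument once the Bregman-distance structure is recognized. A minor subtlety worth a sentence is to note explicitly that $u^\infty\in\dom$ (so that $u_i^\infty>0$ and $u_0^\infty>0$), which follows from $u^0(x)\in\dom$ a.e.\ and averaging; this guarantees all reference-point quantities $\log u_i^\infty$, $\log q(u_0^\infty)$, $\log p_i(u^\infty)$ are finite.
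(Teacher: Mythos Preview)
Your proof is correct and follows essentially the same path as the paper's. You make the nonnegativity via Bregman distances explicit (the paper leaves it implicit), and your boundedness arguments for $h_1^*$ and $h_3^*$ coincide with the paper's. For $h_2^*$ the paper takes a slightly different route: it integrates by parts to write
\[
  h_2^*(u|u^\infty)=u_0\log\frac{q(u_0)}{q(u_0^\infty)}-\int_{u_0^\infty}^{u_0} s\,\frac{q'(s)}{q(s)}\,ds
\]
and then uses (H5) with $\beta\le 1$ to conclude that $s\mapsto sq'(s)/q(s)=s^{1-\beta}\cdot s^\beta q'(s)/q(s)$ is bounded on $[0,1]$. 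Your approach bounds $\int_0^{u_0^\infty}|\log q(s)|\,ds$ directly by integrating the asymptotic $q'(s)/q(s)\sim c_1 s^{-\beta}$; the two are equivalent, since the paper's bound on $sq'(s)/q(s)$ immediately gives $|\log q(s)|\le C(1+|\log s|)$ and hence your integrability. One small remark: under (H4), which forces $q(0)=0$, the case $\beta<1$ in (H5) is actually vacuous (integrability of $q'/q$ near $0$ would keep $q$ bounded away from zero), so only the $\beta=1$ branch of your case split is live, and your $O(\log s)$ estimate is the operative one.
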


\begin{proof}
The function $h_1^*$ is bounded since $u_i\mapsto u_i\log u_i$ is bounded for $0\le u_i\le 1$, and $h_3^*$ is bounded thanks to Hypothesis (H3) on $p_i$. Integrating by parts in $h_2^*(u|u^\infty)$ and observing that $u_0\log q(u_0)\le 0$, we find that
\begin{align}\label{3.aux}
  h_2^*(u|u^\infty) = u_0\log\frac{q(u_0)}{q(u_0^\infty)} - \int_{u_0^\infty}^{u_0}
	s\frac{q'(s)}{q(s)}ds \le -\log q(u_0^\infty) + \int_0^1 s\frac{q'(s)}{q(s)}ds.
\end{align}
By Hypothesis (H5), $\lim_{s\to 0} sq'(s)/q(s) = \lim_{s\to 0}s^{1-\beta}\cdot s^\beta q'(s)/q(s)$ is finite (here, we need $\beta\le 1$). Therefore, $s\mapsto sq'(s)/q(s)$ is bounded on $[0,\delta]$ for some $\delta>0$. On the other hand, $s\mapsto sq'(s)/q(s)$ is also bounded on $[\delta,1]$ since this function is continuous  and $q(s)>0$ for $s>0$ is nondecreasing. This shows that $\int_0^1 (sq'(s)/q(s))ds$ is bounded, proving the claim.
\end{proof}

\subsection{Study of some auxiliary functions}

The study of the large-time behavior is based on the analysis of the two functions
\begin{align}\label{3.f}
  f_1(u) = \sum_{i=1}^n\bigg(q(u_0)u_i\log\frac{q(u_0)u_i}{\bar{q}_i} - q(u_0)u_i
	+ \bar{q}_i\bigg), \quad
	f_2(u_0) = \int_{\bar{q}}^{q(u_0)}\log\frac{q(s)}{q(\bar{q})}ds,
\end{align}
for $u\in\overline{\dom}$, where
\begin{equation}\label{3.qbar}
  \bar{q} = \frac{1}{|\Omega|}\int_\Omega q(u_0)dx, \quad
	\bar{q}_i = \frac{1}{|\Omega|}\int_\Omega q(u_0)u_idx.
\end{equation}

\begin{lemma}\label{lem.f}
The function $f_1$ is nonnegative, and the function $f_2$ is nonnegative and bounded on $\overline\dom$.
\end{lemma}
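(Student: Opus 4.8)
The plan is to treat the three assertions separately, since nonnegativity and boundedness for $f_1$ and $f_2$ come from rather different mechanisms. For the nonnegativity of $f_1$, I would observe that each summand has the form $g(a_i) - g(\bar a_i) - g'(\bar a_i)(a_i - \bar a_i)$ is \emph{not} quite what appears, but rather the closely related expression $a\log(a/b) - a + b$ with $a = q(u_0)u_i$ and $b = \bar q_i$. This is exactly $\psi(a) - \psi(b) - \psi'(b)(a-b)$ for the convex function $\psi(t) = t(\log t - 1)$ (using $\psi'(t) = \log t$), i.e.\ a Bregman distance of a convex function, hence nonnegative pointwise. (Equivalently, $a\log(a/b) - a + b \ge 0$ for all $a,b > 0$ by the elementary inequality $\log x \le x - 1$ applied to $x = b/a$.) Summing over $i$ preserves nonnegativity, and the boundary cases $u_i = 0$ or $u_0 = 0$ (so $q(u_0) = 0$) are handled by the continuous extension $0\log 0 = 0$. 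Since $\bar q, \bar q_i$ are positive whenever $u \not\equiv 0$ on $\Omega$ — which holds because the solution stays in $\dom$ a.e.\ — the quantities are well defined.

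For $f_2$, nonnegativity follows from the monotonicity of $s \mapsto \log(q(s)/q(\bar q))$: since $q$ is increasing on $(0,1]$ (Hypothesis (H4)), the composition $s \mapsto \log q(s)$ is increasing, so $\log(q(s)/q(\bar q))$ has the same sign as $s - \bar q$; therefore the integrand in $f_2(u_0) = \int_{\bar q}^{q(u_0)}\log(q(s)/q(\bar q))\,ds$ is nonnegative on the whole interval of integration regardless of whether $q(u_0) \gtrless \bar q$, giving $f_2 \ge 0$. (One should note $\bar q \in (q(0), q(1)) = (0,1)$ and $q(u_0) \in [0,1)$, so the interval of integration lies in $[0,1]$ where the integrand expression makes sense, again with the convention at $s=0$.) For boundedness of $f_2$ on $\overline\dom$, I would integrate by parts exactly as in the already-proved bound \eqref{3.aux} for $h_2^*$: writing $f_2(u_0) = \int_{\bar q}^{q(u_0)}\log(q(s)/q(\bar q))\,ds$ and integrating by parts gives $f_2(u_0) = q(u_0)\log(q(q(u_0))/q(\bar q)) - \bar q\cdot 0 - \int_{\bar q}^{q(u_0)} s\,(q'(s)/q(s))\,ds$; since $q(u_0) \in [0,1)$ and $s \mapsto s\,q'(s)/q(s)$ is bounded on $[0,1]$ (this is precisely the estimate established in the proof of the preceding lemma using Hypothesis (H5)), while $q\circ q$ maps $[0,1]$ into $[0,1]$ so $\log(q(q(u_0)))$ is bounded above by $0$ and bounded below away from $-\infty$ provided $q(u_0)$ stays bounded away from $0$ — and where $q(u_0) \to 0$ the prefactor $q(u_0)\log q(q(u_0)) \to 0$ — we get a uniform bound. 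The key point making this work without $q'(0) > 0$ is that the "iterated" structure puts $q(q(u_0))$ rather than $q(u_0)$ inside the logarithm, but the boundedness argument for $s\,q'(s)/q(s)$ is unchanged.

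The main obstacle I anticipate is the careful bookkeeping of the degenerate boundary of $\overline\dom$: one must check that all the expressions — and in particular the logarithms $\log(q(u_0)u_i/\bar q_i)$ and $\log(q(q(u_0))/q(\bar q))$ — extend continuously to the closure, so that "bounded on $\overline\dom$" is meaningful. For $f_1$ this requires noting that $t\log t \to 0$ as $t \to 0^+$, so each term $q(u_0)u_i\log(q(u_0)u_i/\bar q_i)$ extends continuously by $0$ on the part of the boundary where $q(u_0)u_i = 0$; however, $f_1$ need not be bounded \emph{above} on $\overline\dom$ in general (the terms $-q(u_0)u_i + \bar q_i$ are bounded, but this is only an issue if $\bar q_i$ could be zero — which it is not, since it is a fixed positive constant for the solution under consideration), and indeed the lemma only claims nonnegativity for $f_1$, not boundedness, so no upper bound is needed. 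For $f_2$, the subtle point is the lower bound near $u_0 = 0$: there $q(u_0) \to 0$, hence $q(q(u_0)) \to 0$ and $\log q(q(u_0)) \to -\infty$, but this is multiplied by $q(u_0) \to 0$; one uses Hypothesis (H5) (specifically $q(s) \ge c\, s^{1/\beta}$-type lower bounds, or more directly the boundedness of $sq'(s)/q(s)$ which forces $\log q(s) \gtrsim -C(1 + |\log s|)$) together with $q(u_0)|\log q(q(u_0))| \le q(u_0)(C + C|\log q(u_0)|) \to 0$ to conclude. Once this limiting behaviour is pinned down, the rest is the routine integration-by-parts estimate already templated by \eqref{3.aux}.
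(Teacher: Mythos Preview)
Your approach matches the paper's almost exactly: the same Bregman-distance observation for $f_1$, the same sign-of-integrand argument for the nonnegativity of $f_2$, and the same integration-by-parts scheme~\eqref{3.aux} together with the boundedness of $s\,q'(s)/q(s)$ for the upper bound on $f_2$.

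There is, however, one genuine missing step in your boundedness argument for $f_2$. After integrating by parts you obtain
\[
  f_2(u_0) = q(u_0)\log q(q(u_0)) - q(u_0)\log q(\bar q) - \int_{\bar q}^{q(u_0)} s\,\frac{q'(s)}{q(s)}\,ds,
\]
and you correctly handle the first and third terms. But you never bound the middle term $-q(u_0)\log q(\bar q)$, and your remark that $\bar q$ is ``a fixed positive constant for the solution under consideration'' is not correct: $\bar q = \bar q(t)$ depends on time, so without a lower bound on $\bar q(t)$ uniform in $t$, the quantity $-\log q(\bar q(t))$ could blow up. The paper closes this gap by invoking the convexity of $q$ from Hypothesis~(H5) and Jensen's inequality to obtain $\bar q(t) \ge q\bigl(|\Omega|^{-1}\int_\Omega u_0\,dx\bigr) = q(u_0^\infty) > 0$, which gives the required time-uniform lower bound. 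This is in fact the only place in the proof where the convexity of $q$ is used, so it is worth flagging explicitly.
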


\begin{proof}
Set $z=q(u_0)u_i/\bar{q}_i$ and let $u\in\overline{\dom}$. Then
$$
  f_1(u) = \sum_{i=1}^n\bar{q}_i(z\log z - z+1)\ge 0,
$$
proving the first claim. To show the nonnegativity of $f_2$, we distinguish two cases. If $q(u_0(x,t))\ge\bar{q}$ at some $(x,t)\in\Omega_T$, then $\log(q(s)/q(\bar{q}))\ge 0$ for any $\bar{q}\le s\le q(u_0(x,t))$ and consequently $f_2(u(x,t))\ge 0$. If $q(u_0(x,t))<\bar{q}$, we have
$\log(q(s)/q(\bar{q})) < 0$ for $q(u_0(x,t))\le s\le\bar{q}$ and
$f_2(u_0(x,t)) = \int_{q(u_0(x,t))}^{\bar{q}}\log(q(\bar{q})/q(s))ds \ge 0$.

It remains to show that $f_2$ is bounded. Since $q$ is convex, Jensen's inequality shows that $\bar{q}\ge q(|\Omega|^{-1}\int_\Omega u_0dx)=q(u_0^\infty)$.
Then, using integration by parts and arguing as in \eqref{3.aux},
\begin{align*}
  f_2(u_0) &= q(u_0)\log\frac{q(q(u_0))}{q(\bar{q})}
	- \int_{\bar{q}}^{q(u_0)} s\frac{q'(s)}{q(s)}ds
	\le -q(u_0)\log q(\bar{q}) + \int_0^1 s\frac{q'(s)}{q(s)}ds \\
	&\le -\log q(q(u_0^\infty)) + \int_0^1 s\frac{q'(s)}{q(s)}ds.
\end{align*}
We already showed above that the last integral is bounded. This finishes the proof.
\end{proof}

%%%%%%%%%%%%%%%%%%%%

\subsection{Convergence of $f_1$ and $f_2$}

\begin{lemma}\label{lem.convf}
It holds for a.e.\ $x\in\Omega$, $s\in(0,1]$ that
$$
  \lim_{N\to\infty}f_1(u(x,s+N)) = 0, \quad \lim_{N\to\infty}f_2(u_0(x,s+N)) = 0.
$$
\end{lemma}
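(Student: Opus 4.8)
The plan is to exploit the two facts we already know: that the entropy inequality \eqref{1.ei} gives
$$
  \int_0^\infty\int_\Omega\Big(q(u_0)\sum_{i=1}^n|\na\sqrt{u_i}|^2 + |\na\sqrt{q(u_0)}|^2\Big)dxdt<\infty,
$$
and that $f_1$ and $f_2$ are controlled by these dissipation integrands via the (logarithmic and convex) Sobolev inequalities. First I would note that $|\na\sqrt{q(u_0)u_i}|^2\le C\big(q(u_0)|\na\sqrt{u_i}|^2 + u_i|\na\sqrt{q(u_0)}|^2\big)\le C\big(q(u_0)|\na\sqrt{u_i}|^2 + |\na\sqrt{q(u_0)}|^2\big)$, using $0\le u_i\le 1$, so that $\sqrt{q(u_0)u_i}$ and $\sqrt{q(u_0)}$ lie in $L^2(0,\infty;H^1(\Omega))$. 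Applying the logarithmic Sobolev inequality \eqref{3.lsi} with $v=q(u_0)u_i$ bounds $\int_\Omega f_1(u)dx$ by $C\sum_i\int_\Omega|\na\sqrt{q(u_0)u_i}|^2dx$, hence $\int_0^\infty\int_\Omega f_1(u)\,dxdt<\infty$. For $f_2$, I would invoke the convex Sobolev inequality of Lemma~\ref{lem.csi} with the ``iterated'' choice $g$ a primitive so that $g''(s)=\log'(q(q^{-1})\cdots)$ — more precisely, as indicated in the sketch and in Remark~\ref{rem.f2}, choosing $g$ with $g(q(u_0))=f_2$-type density and $v=q(u_0)$, which requires $q$ convex and $q/q'$ concave (Hypothesis (H5)); this yields $\int_\Omega f_2(u_0)dx\le C\int_\Omega|\na\sqrt{q(u_0)}|^2dx$ (up to the constants $\bar q$, which are bounded away from $0$ and $\infty$ uniformly in time by mass conservation and Lemma~\ref{lem.f}), so also $\int_0^\infty\int_\Omega f_2(u_0)\,dxdt<\infty$.

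Next, I would pass from the finite time integral to pointwise convergence along the integer-shifted sequence. Fix $s\in(0,1]$. Since $\int_0^\infty\big(\int_\Omega f_1(u(t))dx\big)dt<\infty$, Fubini gives that for a.e.\ $x\in\Omega$ the map $t\mapsto \sum_{i}f_1$-integrand is integrable on $(0,\infty)$; more directly, $\sum_{N=1}^\infty\int_s^{s+1}\int_\Omega(f_1+f_2)(u(x,t+N-1))\,dxdt = \int_s^\infty\int_\Omega(f_1+f_2)\,dxdt<\infty$, wait — cleaner: $\sum_{N=1}^\infty \int_\Omega (f_1+f_2)(u(x,s+N))dx$ need not converge because $s$ is a single slice. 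So instead I would argue: by Fubini, $\int_\Omega\big(\int_0^\infty (f_1+f_2)(u(x,t))dt\big)dx<\infty$, hence for a.e.\ $x$ the function $t\mapsto (f_1+f_2)(u(x,t))$ is in $L^1(0,\infty)$; therefore $(f_1+f_2)(u(x,s+N))\to 0$ as $N\to\infty$ for a.e.\ $(x,s)$ — here one uses that an $L^1(0,\infty)$ function has a subsequence of shifts tending to zero, and by monotonicity considerations (or simply: the sequence $a_N(x,s):=(f_1+f_2)(u(x,s+N))$ satisfies $\int_0^1\sum_N a_N(x,s)ds = \int_1^\infty(f_1+f_2)(u(x,t))dt<\infty$) one gets $a_N(x,s)\to 0$ for a.e.\ $x$ and a.e.\ $s\in(0,1]$. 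Since $f_1,f_2\ge 0$ (Lemma~\ref{lem.f}), both $f_1(u(x,s+N))\to 0$ and $f_2(u_0(x,s+N))\to 0$ separately.

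The main obstacle I anticipate is the $f_2$ estimate: one must verify that the convex Sobolev inequality can indeed be applied to an \emph{iterated} entropy density built from $q\circ q$, i.e.\ that the relevant generating function $g$ (with $g(q(u_0))$ recovering the $f_2$-density up to affine terms) satisfies the hypotheses of Lemma~\ref{lem.csi} — convexity and concavity of $1/g''$. A short computation shows $g''$ is essentially $s\mapsto \log'$ composed appropriately, and the condition ``$q/q'$ concave'' in (H5) is precisely what is needed; the details are deferred to Remark~\ref{rem.f2}. A secondary point is making sure the normalizing constants $\bar q$ and $\bar q_i$ stay in a compact subinterval of $(0,1)$ uniformly in time so that all the Sobolev constants can be taken uniform: this follows from $\bar q\ge q(u_0^\infty)>0$ (Jensen, as in Lemma~\ref{lem.f}), $\bar q\le q(1)=1$, and the analogous two-sided bound for $\bar q_i$ using mass conservation and $0<u_i^\infty<1$. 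Once these are in place, the nonnegativity of $f_1,f_2$ turns the finiteness of the time integrals into the claimed pointwise limits, completing the proof.
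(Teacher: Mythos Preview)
Your outline is correct and follows the paper's route: use the entropy inequality to bound the time-integrated dissipation, estimate $|\na\sqrt{q(u_0)u_i}|^2\le 2q(u_0)|\na\sqrt{u_i}|^2+2|\na\sqrt{q(u_0)}|^2$, apply the logarithmic Sobolev inequality with $v=q(u_0)u_i$ to obtain $\int_0^\infty\int_\Omega f_1\,dxdt<\infty$, apply the convex Sobolev inequality with $v=q(u_0)$ and $g''(s)=q'(s)/q(s)$ (so $1/g''=q/q'$ is concave by (H5)) to obtain $\int_0^\infty\int_\Omega f_2\,dxdt<\infty$, and then use the series identity $\sum_{N}\int_0^1\int_\Omega(f_1+f_2)(u(x,s+N))\,dx\,ds<\infty$ together with nonnegativity to conclude a.e.\ convergence. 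Your Fubini-based justification of the last step is in fact slightly more explicit than the paper's phrasing.

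One point needs correcting. In the $f_2$ estimate, the issue is \emph{not} uniform two-sided bounds on $\bar q$ or $\bar q_i$: the convex Sobolev constant $C_S$ in Lemma~\ref{lem.csi} depends only on $\Omega$, and since $g(\bar q;t)=0$ by construction the value of $\bar q$ plays no further role. What you are missing is that Lemma~\ref{lem.csi} applied with $v=q(u_0)$ yields
\[
  \int_\Omega f_2(u_0)\,dx \le C_S\int_\Omega \frac{q'(q(u_0))}{q(q(u_0))}\,|\na q(u_0)|^2\,dx
  = 4C_S\int_\Omega \frac{q(u_0)\,q'(q(u_0))}{q(q(u_0))}\,|\na\sqrt{q(u_0)}|^2\,dx,
\]
and to reach $C\int_\Omega|\na\sqrt{q(u_0)}|^2\,dx$ you must bound the factor $sq'(s)/q(s)$ uniformly for $s\in[0,1]$. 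This is precisely where the limit condition $\lim_{s\to 0}s^\beta q'(s)/q(s)=c_1>0$ with $\beta\le 1$ in (H5) is used (write $sq'(s)/q(s)=s^{1-\beta}\cdot s^\beta q'(s)/q(s)$). Remark~\ref{rem.f2} does not supply this step; it only contrasts the iterated construction with the nondegenerate case.
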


\begin{proof}
The idea is to exploit the boundedness of the entropy production integrated over $t\in(0,\infty)$. First, we consider $f_1$. We know from \eqref{3.ei} for $s=0$ and $t\to\infty$ that
\begin{equation}\label{3.infty}
  c_0\int_0^\infty\int_\Omega\bigg(q(u_0)\sum_{i=1}^n|\na\sqrt{u_i}|^2
	+ |\na\sqrt{q(u_0)}|^2\bigg)dxdt \le \int_\Omega h^*(u^0|u^\infty)dx.
\end{equation}
Thus, in view of $q(u_0)u_i\ge 0$ and
\begin{align*}
  |\na\sqrt{q(u_0)u_i}|^2 &= q(u_0)|\na\sqrt{u_i}|^2
	+ 2\sqrt{q(u_0)u_i}\na\sqrt{q(u_0)}\cdot\na\sqrt{u_i} + u_i|\na\sqrt{q(u_0)}|^2 \\
	&\le 2q(u_0)|\na\sqrt{u_i}|^2 + 2|\na\sqrt{q(u_0)}|^2,
\end{align*}
it follows for a constant $C>0$ being independent of time that
\begin{equation*}
  \int_0^\infty\int_\Omega|\na\sqrt{q(u_0)u_i}|^2 dxdt \le C.
\end{equation*}
Furthermore, by the logarithmic Sobolev inequality \eqref{3.lsi}, applied to $v=q(u_0)u_i$,
$$
  \int_0^\infty\int_\Omega q(u_0)u_i\log\frac{q(u_0)u_i}{\bar{q}_i}dx
	\le C\int_0^\infty\int_\Omega|\na\sqrt{q(u_0)u_i}|^2 dx \le C,
$$
recalling definition \eqref{3.qbar} of $\bar{q}_i$. Taking into account definition \eqref{3.f} of $f_1$, we see that
$$
  \int_0^\infty\int_\Omega f_1(u(x,t))dxds
	= \sum_{N=0}^{\infty}\int_0^1\int_\Omega f_1(u(x,s+N))dx ds < \infty.
$$
Therefore, the sequence $N\mapsto \int_0^1\int_\Omega f_1(u(\cdot,s+N))dx ds$ converges to zero,
$$
  \lim_{N\to\infty}f_1(u(x,s+N)) = 0\quad\mbox{for a.e. }x\in\Omega,\ s\in(0,1].
$$

Next, we prove the limit for $f_2$. For any fixed $t>0$, we introduce the nonnegative function
$$
  f(s;t) = \int_{\bar{q}(t)}^s\log\frac{q(\sigma)}{q(\bar{q}(t))}d\sigma, \quad 0<s\le 1.
$$
By Lemma \ref{lem.f}, $x\mapsto f(q(u_0(x,t));t) = f_2(u(x,t))$ is integrable in $\Omega$
for any fixed $t>0$. Moreover, $f(\cdot,t)$ is twice differentiable in $(0,1)$:
$$
  \frac{df}{ds}(s;t) = \log\frac{q(s)}{q(\bar{q}(t))}, \quad
	\frac{d^2 f}{ds^2}(s;t) = \frac{q'(s)}{q(s)} > 0.
$$
We infer from the positivity of $d^2f/ds^2$ that $f(\cdot,t)$ is convex. By Hypothesis (H5), $(d^2f/ds^2)^{-1} = q/q'$ is concave. Thus, the assumptions of the convex Sobolev inequality (Lemma \ref{lem.csi}) are satisfied for $f(q(u_0(x,t));t)$:
\begin{align*}
  \frac{1}{|\Omega|}\int_\Omega & f(q(u_0(x,t));t)dx
	- f\bigg(\frac{1}{|\Omega|}\int_\Omega q(u_0(x,t))dx;t\bigg) \\
	&\le C(\Omega)\int_\Omega \frac{q'(q(u_0(x,t)))}{q(q(u_0(x,t)))}|\na q(u_0)|^2 dx.
\end{align*}
Hence, since $f(\bar{q}(t);t)=0$ by definition and recalling that $f(q(u_0(x,t));t) = f_2(u_0(x,t))$, the previous inequality becomes
\begin{equation}\label{2.f2est}
  \int_\Omega f_2(u_0)dx
	\le C(\Omega)\int_\Omega\frac{q(u_0)q'(q(u_0))}{q(q(u_0))}\frac{|\na q(u_0)|^2}{q(u_0)}dx
	\le C\int_\Omega|\na\sqrt{q(u_0)}|^2 dx,
\end{equation}
where we used Hypothesis (H5) to infer that
$$
  \frac{s q'(s)}{q(s)} = s^{1-\beta}\frac{s^\beta q'(s)}{q(s)}\quad\mbox{with }
	s = q(u_0)
$$
is bounded in $[0,1]$. By \eqref{3.infty}, the integrated entropy dissipation is finite:
$$
  \int_0^\infty\int_\Omega f_2(u_0)dxdt \le C\int_0^\infty\int_\Omega|\na\sqrt{q(u_0)}|^2
	dxdt \le C.
$$
Therefore, arguing as for the function $f_1$, we obtain
$\lim_{N\to\infty}f_2(u_0(x,s+N)) = 0$ for a.e.\ $x\in\Omega$, $s\in(0,1]$,
which finishes the proof.
\end{proof}
\begin{remark}\label{rem.f2}\rm
In the nondegenerate case $q'(0)>0$, it was shown in \cite[Section 5]{ZaJu17} that $t\mapsto
h_2^*(u(t)|u^\infty)$ converges to zero exponentially fast. Indeed, applying the convex Sobolev inequality similarly as in the previous proof,
\begin{equation}\label{2.h2est}
  \int_\Omega h_2^*(u|u^\infty)dx \le C\int_\Omega\frac{q'(u_0)}{q(u_0)}|\na u_0|^2 dx
  = 4C\int_\Omega\frac{|\na\sqrt{q(u_0)}|^2}{q'(u_0)} dx,
\end{equation}
and we conclude from the entropy inequality \eqref{1.ei} and Gronwall's lemma. Since we allow for $q'(0)=0$, this argument cannot be used here. We solve this issue by considering the ``iterated'' function $f_2$ involving $q\circ q$ and assuming that $s\mapsto sq'(s)/q(s)$ is bounded; see \eqref{2.f2est}. The iterated use of $q$ gives the term $|\na\sqrt{q(u_0)}|^2$ in \eqref{2.f2est} without requiring the nondegeneracy condition $q'(0)>0$.
\qed\end{remark}

A consequence of the limit for $f_2$ is the following result.

\begin{lemma}\label{lem.conv1}
If $\lim_{N\to\infty}f_2(u_0(x,s+N))=0$ for some $x\in\Omega$, $s\in(0,1]$ then
$$
  \lim_{N\to\infty}\frac{q(u_0(x,s+N))}{\bar{q}(s+N)} = 1.
$$
\end{lemma}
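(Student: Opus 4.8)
The plan is to show that $f_2(u_0(x,s+N))\to 0$ forces $q(u_0(x,s+N))/\bar q(s+N)\to 1$ by exploiting the strict convexity properties of the function $f(\cdot;t)$ built from $q$. Write $z_N = q(u_0(x,s+N))$ and $\bar q_N = \bar q(s+N)$, so that $f_2(u_0(x,s+N)) = \int_{\bar q_N}^{z_N}\log(q(\sigma)/q(\bar q_N))\,d\sigma$. Since $q$ is increasing, the integrand $\log(q(\sigma)/q(\bar q_N))$ is negative for $\sigma<\bar q_N$ and positive for $\sigma>\bar q_N$, so the integral is nonnegative (as already shown in Lemma~\ref{lem.f}) and, crucially, it vanishes only when $z_N=\bar q_N$. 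The quantitative version of this is what I would establish: a lower bound on $f_2$ in terms of $|z_N/\bar q_N - 1|$, uniform over the relevant range of $\bar q_N$.

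First I would record that $\bar q_N$ stays in a compact subinterval $[q(u_0^\infty),1]\subset(0,1]$ of $(0,1]$: the lower bound is Jensen's inequality (used already in the proof of Lemma~\ref{lem.f}), and $q(u_0^\infty)>0$ because $u_0^\infty>0$ by Hypothesis~(H2) together with $u^0\in\dom$; the upper bound is $q\le q(1)=1$. On this compact interval $q$ and $q'$ are bounded above and $q$ is bounded below away from zero, so $q(\sigma)/q(\bar q_N)$ and its reciprocal are comparable to $1$ up to uniform constants when $\sigma$ ranges over $[\min(z_N,\bar q_N),\max(z_N,\bar q_N)]$ — but I do not even need this refinement for the core argument. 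Instead, suppose for contradiction that $q(u_0(x,s+N))/\bar q(s+N)$ does not converge to $1$; then along a subsequence (not relabeled) either $z_N/\bar q_N\ge 1+\delta$ or $z_N/\bar q_N\le 1-\delta$ for some fixed $\delta>0$. Passing to a further subsequence, $\bar q_N\to\bar q_*\in[q(u_0^\infty),1]$ and $z_N\to z_*$ with $z_*/\bar q_*\ge 1+\delta$ or $z_*/\bar q_*\le 1-\delta$; in either case $z_*\ne\bar q_*$ and both lie in $[0,1]$ (indeed $z_*\in[q(u_0^\infty)(1-\delta),1]$ if $z_*<\bar q_*$, and $z_*\in[q(u_0^\infty),1]$ otherwise, so $z_*>0$ in the first case and trivially $\bar q_*>0$). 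Since $(\sigma,t)\mapsto \log(q(\sigma)/q(t))$ is continuous on $\{0<t\le1,\ 0<\sigma\le1\}$ and the integrand keeps a definite sign, $f_2$ depends continuously on $(z_N,\bar q_N)$ on the closed region where both coordinates are bounded below by $q(u_0^\infty)/2>0$ (which we may arrange, possibly replacing $q(u_0^\infty)$ by a smaller positive constant to absorb the $(1-\delta)$ factor), so $f_2(u_0(x,s+N))\to \int_{\bar q_*}^{z_*}\log(q(\sigma)/q(\bar q_*))\,d\sigma$. This limit is strictly positive because $q$ is strictly increasing (Hypothesis~(H4) gives $q'>0$ on $(0,1]$) and $z_*\ne\bar q_*$, so the integrand is not identically zero and has a fixed sign on the interval of integration. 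This contradicts $f_2(u_0(x,s+N))\to 0$, and the claim follows.

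The main obstacle is making sure the integration variable $\sigma$ in $f_2$ stays away from $0$, where $\log q(\sigma)\to-\infty$, so that the continuity-of-$f_2$ argument is legitimate; this is handled by the observation that $\bar q_N\ge q(u_0^\infty)>0$ and that in the "undershoot" case $z_N\le(1-\delta)\bar q_N$ the integration interval $[z_N,\bar q_N]$ is nonetheless contained in $[0,\bar q_N]$, where the integrand's singularity at $0$ is \emph{integrable} (since $\int_0^1 s\,q'(s)/q(s)\,ds<\infty$ by Hypothesis~(H5) controls $\int_0^1|\log q(s)|\,ds$ after an integration by parts), and the integral of $|\log q|$ over $[0,z_N]$ tends to $0$ as $z_N\to 0$ by dominated convergence — so even if $z_*=0$ the limiting integral $\int_0^{\bar q_*}\log(q(\sigma)/q(\bar q_*))\,d\sigma$ is finite and, being $\int_0^{\bar q_*}\log(q(\bar q_*)/q(\sigma))\,d\sigma$ with a strictly positive integrand on $(0,\bar q_*)$, strictly positive. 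Thus the dichotomy "either the ratio is near $1$ or $f_2$ is bounded below" closes in all cases.
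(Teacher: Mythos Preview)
Your argument is correct and proceeds along a genuinely different line from the paper. Both proofs argue by contradiction and both use Jensen's inequality to pin down $\bar q_N\ge q(u_0^\infty)>0$, but they part ways after that. The paper rescales $\sigma\mapsto\sigma/\bar q^N$ to write $f_2(u_0^N)=\bar q^N\int_1^{q(u_0^N)/\bar q^N}\log\big(q(\bar q^N\sigma)/q(\bar q^N)\big)\,d\sigma$ and, assuming the ratio stays $\ge 1+\eps_0$ (or $\le 1-\eps_0$), restricts the integral to $[1+\eps_0/2,1+\eps_0]$ (resp.\ $[1-\eps_0,1-\eps_0/2]$), bounds the integrand from below by a first-order Taylor expansion using the convexity of $q$, and then invokes Hypothesis~(H5) together with $\bar q^N\ge q(u_0^\infty)$ to obtain the explicit $N$-uniform lower bound $\tfrac{\eps_0}{2}\log\big(1+c_0\,q(u_0^\infty)^{1-\beta}\tfrac{\eps_0}{2}\big)$. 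You instead extract convergent subsequences $z_N\to z_*$, $\bar q_N\to\bar q_*$ and pass to the limit in $f_2$ by continuity (for $z_*>0$) or via the integrability of $\log q$ on $(0,1)$ (for $z_*=0$), landing on a strictly positive limiting integral in either case. Your route is softer---it uses only the strict monotonicity of $q$ and $\log q\in L^1(0,1)$ (which follows from the boundedness of $sq'(s)/q(s)$ in (H5)), not the convexity-based Taylor estimate---while the paper's explicit bound is more quantitative and could in principle feed into rate-type statements.

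One correction: the parenthetical in your second paragraph asserting $z_*\ge q(u_0^\infty)(1-\delta)$ in the undershoot case $z_*<\bar q_*$ is unfounded; the inequality $z_N\le(1-\delta)\bar q_N$ yields no positive lower bound on $z_N$, and $z_*=0$ is genuinely possible. Your third paragraph correctly addresses exactly this scenario via the integrability of $\log q$, so the proof stands---just delete the erroneous parenthetical and the remark about ``absorbing the $(1-\delta)$ factor''.
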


\begin{proof}
We write $u_i^N:=u_i(x,s+N)$ and $\bar{q}^N=\bar{q}(s+N)$
to simplify the notation. We recall from Lemma \ref{lem.f} that $f_2$ is nonnegative and change the variable $\sigma=s/\bar{q}^N$ in the integral:
\begin{align*}
  f_2(u_0^N) &= \int_{\bar{q}^N}^{q(u_0^N)}\log\frac{q(s)}{q(\bar{q}^N)}ds
	= \bar{q}^N\int_1^{q(u_0^N)/\bar{q}^N}\log\frac{q(\bar{q}^N\sigma)}{q(\bar{q}^N)}d\sigma \\
	&\ge q(u_0^\infty)\int_1^{q(u_0^N)/\bar{q}^N}
	\log\frac{q(\bar{q}^N\sigma)}{q(\bar{q}^N)}d\sigma,
\end{align*}
where we used Jensen's inequality to find that $\bar{q}^N\ge q(|\Omega|^{-1}\int_\Omega u_0^N dx) = q(u_0^\infty)$. Moreover, since $\bar{q}^N\le 1$,
$$
  q(u_0^\infty)\int_1^{q(u_0^N)/\bar{q}^N}
	\log\frac{q(\bar{q}^N\sigma)}{q(\bar{q}^N)}d\sigma
  \le f_2(u_0^N) \le \int_1^{q(u_0^N)/\bar{q}^N}
	\log\frac{q(\bar{q}^N\sigma)}{q(\bar{q}^N)}d\sigma.
$$
This shows that $\lim_{N\to\infty}f_2(u_0^N)=0$ if and only if
\begin{equation}\label{3.con}
  \lim_{N\to\infty}\int_1^{q(u_0^N)/\bar{q}^N}
	\log\frac{q(\bar{q}^N\sigma)}{q(\bar{q}^N)}d\sigma = 0.
\end{equation}

Set $A:=\{(x,s)\in\Omega\times(0,1]:\lim_{N\to\infty} f_2(u_0(x,s+N))=0\}$. We want to show that $\lim_{N\to\infty} q(u_0^N)/\bar{q}^N=1$ for $(x,s)\in A$. If not, there exist $(x_0,s_0)\in A$ and $\eps_0>0$ such that either
$$
  \frac{q(u_0^N)}{\bar{q}^N} > 1+\eps_0 \quad\mbox{or}\quad
	\frac{q(u_0^N)}{\bar{q}^N} < 1-\eps_0\quad\mbox{for all }N\in\N.
$$

In the former case, we have $q(\bar{q}^N\sigma)\ge q(\bar{q}^N(1+\eps_0/2))$
for $\sigma\ge 1+\eps_0/2$, since $q$ is increasing, and therefore,
\begin{equation}\label{3.aux2}
  \int_1^{q(u_0^N)/\bar{q}^N}\log\frac{q(\bar{q}^N\sigma)}{q(\bar{q}^N)}d\sigma
	\ge \int_{1+\eps_0/2}^{1+\eps_0}\log\frac{q(\bar{q}^N(1+\eps_0/2))}{q(\bar{q}^N)}d\sigma.
\end{equation}
Using the convexity of $q$, a Taylor expansion shows that
$q(\bar{q}^N + \bar{q}^N\eps_0/2) \ge q(\bar{q}^N) + q'(\bar{q}^N)\bar{q}^N\eps_0/2$.
Then the integrand of the previous integral can be estimated according to
$$
  \log\bigg(\frac{q(\bar{q}^N(1+\eps_0/2))}{q(\bar{q}^N)}\bigg)
	\ge \log\bigg(1 + \frac{q'(\bar{q}^N)}{q(\bar{q}^N)}\bar{q}^N\frac{\eps_0}{2}\bigg)
	\ge \log\bigg(1 + c_0q(u_0^\infty)^{1-\beta}\frac{\eps_0}{2}\bigg),
$$
where we used Hypothesis (H5) and $\bar{q}^N\ge q(u_0^\infty)$ in the last step, and $c_0>0$ is some constant. As the right-hand side is independent of $\sigma$, we infer from \eqref{3.aux2} that
$$
  \int_1^{q(u_0^N)/\bar{q}^N}\log\frac{q(\bar{q}^N\sigma)}{q(\bar{q}^N)}d\sigma
	\ge \frac{\eps_0}{2}\log\bigg(1 + c_0q(u_0^\infty)^{1-\beta}\frac{\eps_0}{2}\bigg).
$$

In the latter case $q(u_0^N)/\bar{q}^N<1-\eps_0$, we estimate as
\begin{align*}
  \int_1^{q(u_0^N)/\bar{q}^N}\log\frac{q(\bar{q}^N\sigma)}{q(\bar{q}^N)}d\sigma
	&= \int_{q(u_0^N)/\bar{q}^N}^1\log\frac{q(\bar{q}^N)}{q(\bar{q}^N\sigma)}d\sigma \\
	&\ge \int_{1-\eps_0}^{1-\eps_0/2}\log\frac{q(\bar{q}^N)}{q(\bar{q}^N(1-\eps_0/2)}d\sigma.
\end{align*}
We apply again a Taylor expansion, similarly as in the first case,
$$
  q(\bar{q}^N) = q\bigg(\bar{q}^N\bigg(1-\frac{\eps_0}{2}\bigg)
	+ \frac{\eps_0}{2}\bar{q}^N\bigg)
	\ge q\bigg(\bar{q}^N\bigg(1-\frac{\eps_0}{2}\bigg)\bigg)
	+ q'\bigg(\bar{q}^N\bigg(1-\frac{\eps_0}{2}\bigg)\bigg)\frac{\eps_0}{2}\bar{q}^N,
$$
which leads to
$$
  \log\frac{q(\bar{q}^N)}{q(\bar{q}^N(1-\eps_0/2)}
	\ge \log\bigg(1 + \frac{q'(\bar{q}^N(1-\eps_0/2))}{q(\bar{q}^N(1-\eps_0/2))}
	\frac{\eps_0}{2}\bar{q}^N\bigg)
	\ge \log\bigg(1 + c_0q(u_0^\infty)^{1-\beta}\frac{\eps_0}{2}\bigg).
$$
Thus, in both cases,
$$
  \int_1^{q(u_0^N)/\bar{q}^N}\log\frac{q(\bar{q}^N\sigma)}{q(\bar{q}^N)}d\sigma > 0
	\quad\mbox{uniformly in }N\in\N,
$$
which contradicts \eqref{3.con} and consequently $\lim_{N\to\infty}f_2(u_0^N)=0$.
\end{proof}

%%%%%%%%%%%%%%%%%%%%%

\subsection{Key lemma}

We show that $f_1(u(\cdot,s+N))/\bar{q}(s+N)$ and $h_1^*(u(\cdot,s+N)|u^\infty)$ are close for sufficiently large $N\in\N$. The following lemma is the key of the proof.

\begin{lemma}\label{lem.key}
For a.e.\ $x\in\Omega$, $s\in(0,1]$, it holds that
$$
  \lim_{N\to\infty}\bigg(\frac{f_1(u(x,s+N))}{\bar{q}(s+N)}
	- h_1^*(u(x,s+N)|u^\infty)\bigg) = 0.
$$
\end{lemma}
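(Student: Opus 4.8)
The plan is to fix $s\in(0,1]$ in the full-measure set for which the conclusions of Lemma \ref{lem.convf} hold for a.e.\ $x\in\Omega$ (so that, via Lemma \ref{lem.conv1}, also $q(u_0(\cdot,s+N))/\bar q(s+N)\to1$ for a.e.\ spatial point), and to abbreviate $u_i^N=u_i(x,s+N)$, $u_0^N=u_0(x,s+N)$, $\bar q^N=\bar q(s+N)$, $\bar q_i^N=\bar q_i(s+N)$, $r^N=q(u_0^N)/\bar q^N$, $\rho_i^N=\bar q_i^N/\bar q^N$. With the Bregman function $\Phi(a,b)=a\log(a/b)-a+b$ for $a\ge0$, $b>0$ (convention $0\log0=0$), a direct computation from \eqref{3.f} gives the identity
$$
  \frac{f_1(u(x,s+N))}{\bar q^N}-h_1^*(u(x,s+N)|u^\infty)
  =\sum_{i=1}^n\Big(\Phi(r^Nu_i^N,\rho_i^N)-\Phi(u_i^N,u_i^\infty)\Big),
$$
since $q(u_0^N)u_i^N/\bar q^N=r^Nu_i^N$, $q(u_0^N)u_i^N/\bar q_i^N=r^Nu_i^N/\rho_i^N$, and $\bar q_i^N/\bar q^N=\rho_i^N$. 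Hence it suffices to show $r^N\to1$ and $\rho_i^N\to u_i^\infty$ and then to pass to the limit in the right-hand side by continuity.

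First I would record the bounds $0<q(u_0^\infty)\le\bar q^N\le1$: the upper one follows from $u_0\le1$ and $q(1)=1$, the lower one from Jensen's inequality together with mass conservation, $\bar q^N\ge q(|\Omega|^{-1}\int_\Omega u_0\,dx)=q(u_0^\infty)$, where $u_0^\infty,u_i^\infty\in(0,1)$ by Hypothesis (H2); in particular $\bar q^N$ stays bounded away from $0$. The convergence $r^N\to1$ is then exactly the content of Lemmas \ref{lem.convf} and \ref{lem.conv1}. For $\rho_i^N$ I would use mass conservation, $|\Omega|^{-1}\int_\Omega u_i(\cdot,s+N)\,dx=u_i^\infty$, to write
$$
  \bar q_i^N-u_i^\infty\bar q^N=\frac1{|\Omega|}\int_\Omega\big(q(u_0(y,s+N))-\bar q^N\big)\,u_i(y,s+N)\,dy,
$$
so that $|\bar q_i^N-u_i^\infty\bar q^N|\le|\Omega|^{-1}\int_\Omega|q(u_0(y,s+N))-\bar q^N|\,dy$ because $0\le u_i\le1$. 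By Lemma \ref{lem.conv1}, $q(u_0(y,s+N))-\bar q^N=\bar q^N\big(q(u_0(y,s+N))/\bar q^N-1\big)\to0$ for a.e.\ $y$; since this quantity is bounded by $1$ in absolute value, dominated convergence yields $\bar q_i^N-u_i^\infty\bar q^N\to0$, and dividing by $\bar q^N\ge q(u_0^\infty)>0$ gives $\rho_i^N\to u_i^\infty$.

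Finally I would pass to the limit: from $r^Nu_i^N\le r^N\le q(u_0^\infty)^{-1}$, $u_i^N\le1$, $\rho_i^N\le1$, $\rho_i^N\to u_i^\infty>0$, all the points $(r^Nu_i^N,\rho_i^N)$ and $(u_i^N,u_i^\infty)$ lie, for $N$ large, in a fixed compact rectangle $[0,M]\times[\delta,M]$ with $\delta>0$, on which $\Phi$ is continuous, hence uniformly continuous; since $|r^Nu_i^N-u_i^N|=(1-r^N)u_i^N\to0$ and $\rho_i^N-u_i^\infty\to0$, each summand $\Phi(r^Nu_i^N,\rho_i^N)-\Phi(u_i^N,u_i^\infty)\to0$, which proves the lemma. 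The heart of the argument, and the place where the ``iterated'' function $f_2$ is genuinely used, is the convergence $\rho_i^N\to u_i^\infty$, i.e.\ that the $q(u_0)$-weighted spatial average of $u_i$ converges to its unweighted average $u_i^\infty$; this rests on Lemma \ref{lem.conv1}, hence on the estimate \eqref{2.f2est}. The only mild technical care is the bookkeeping of the almost-everywhere statements: one fixes $s$ once so that the pointwise limits of Lemmas \ref{lem.convf}--\ref{lem.conv1} are simultaneously available at the chosen $x$ and, for a.e.\ $y\in\Omega$, inside the dominated-convergence step.
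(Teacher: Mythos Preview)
Your proof is correct and follows essentially the same route as the paper: both arguments hinge on the two convergences $q(u_0^N)/\bar q^N\to1$ (from Lemmas~\ref{lem.convf} and~\ref{lem.conv1}) and $\bar q_i^N/\bar q^N\to u_i^\infty$ (via mass conservation and dominated convergence), and then pass to the limit in the difference. The only cosmetic difference is that the paper expands the logarithms and treats three bracketed pieces separately, whereas you package the expression as $\sum_i\big(\Phi(r^Nu_i^N,\rho_i^N)-\Phi(u_i^N,u_i^\infty)\big)$ and invoke uniform continuity of $\Phi$ on a compact rectangle bounded away from $b=0$; this is slightly cleaner but not a genuinely different idea.
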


\begin{proof}
We set $u^N:=u(\cdot,s+N)$, $\bar{q}^N=\bar{q}(s+N)$, and $\bar{q}_i^N=|\Omega|^{-1}\int_\Omega q(u_0^N)u_i^Ndx$. Inserting definition \eqref{3.f} of $f_1$, the lemma is proved if we can show that for any $i=1,\ldots,n$,
\begin{align}\label{3.aux3}
  0 &= \lim_{N\to\infty}\bigg(\frac{q(u_0^N)}{\bar{q}^N}u_i^N
	\log\frac{q(u_0^N)u_i^N}{\bar{q}_i^N}
	- \frac{q(u_0^N)}{\bar{q}^N}u_i^N + \frac{\bar{q}_i^N}{\bar{q}^N}
	- u_i^N\log\frac{u_i^N}{u_i^\infty}
	+ u_i^N - u_i^\infty\bigg) \\
	&= \lim_{N\to\infty}\bigg\{\bigg(\frac{q(u_0^N)}{\bar{q}^N}u_i^N
	\log\frac{q(u_0^N)u_i^N}{\bar{q}_i^N} - u_i^N\log\frac{u_i^N}{u_i^\infty}\bigg)
	- \bigg(\frac{q(u_0^N)}{\bar{q}^N}u_i^N - u_i^N\bigg) \nonumber \\
	&\phantom{xx}+ \bigg(\frac{\bar{q}_i^N}{\bar{q}^N} - u_i^\infty\bigg)\bigg\}. \nonumber
\end{align}
Fix $i\in\{1,\ldots,n\}$. We know from Lemmas \ref{lem.convf} and \ref{lem.conv1} that
$\lim_{N\to\infty} q(u_0^N)/\bar{q}^N=1$ a.e. Together with the boundedness of $u_i^N$,
this shows that
$$
  \lim_{N\to\infty}\bigg(\frac{q(u_0^N)}{\bar{q}^N}u_i^N - u_i^N\bigg) = 0
$$
as well as
\begin{align*}
  \lim_{N\to\infty}&\bigg(\frac{q(u_0^N)}{\bar{q}^N}u_i^N\log\frac{q(u_0^N)u_i^N}{\bar{q}_i^N}
	- u_i^N\log\frac{u_i^N}{u_i^\infty}\bigg) \\
	&= \lim_{N\to\infty}\bigg(\frac{q(u_0^N)}{\bar{q}^N}u_i^N
	\log\frac{(q(u_0^N)/\bar{q}^N)u_i^N}{\bar{q}_i^N/\bar{q}^N}
	- u_i^N\log\frac{u_i^N}{u_i^\infty}\bigg) \\
	&= \lim_{N\to\infty}\bigg\{\frac{q(u_0^N)}{\bar{q}^N}u_i^N
	\log\frac{q(u_0^N)}{\bar{q}^N}
	+ \bigg(\frac{q(u_0^N)}{\bar{q}^N}-1\bigg)u_i^N\log\frac{u_i^N}{u_i^\infty} \\
	&\phantom{xx}{}
	- \frac{q(u_0^N)}{\bar{q}^N}u_i^N\log\frac{\bar{q}_i^N/\bar{q}^N}{u_i^\infty}\bigg\}
	= - \lim_{N\to\infty}\frac{q(u_0^N)}{\bar{q}^N}u_i^N
	\log\frac{\bar{q}_i^N/\bar{q}^N}{u_i^\infty}.
\end{align*}
To show that the limit on the right-hand side equals zero, we observe that, because of mass conservation and dominated convergence,
\begin{align*}
  &\lim_{N\to\infty}\bigg(\frac{\bar{q}_i^N}{\bar{q}^N} - u_i^\infty\bigg)
	= \lim_{N\to\infty}\bigg(\frac{1}{|\Omega|}\int_\Omega\frac{q(u_0^N)}{\bar{q}^N}u_i^N dx
	- u_i^\infty\bigg) \\
	&\phantom{x}
	= \lim_{N\to\infty}\bigg(\frac{1}{|\Omega|}\int_\Omega\frac{q(u_0^N)}{\bar{q}^N}u_i^N dx
	- \frac{1}{|\Omega|}\int_\Omega u_i^0 dx\bigg)
	= \lim_{N\to\infty}\frac{1}{|\Omega|}\int_\Omega\bigg(\frac{q(u_0^N)}{\bar{q}^N}-1\bigg)
	u_i^N dx = 0,
\end{align*}
and this is equivalent to $\lim_{N\to\infty}\log((\bar{q}_i^N/\bar{q}^N)/u_i^\infty)=0$. We conclude that
$$
  \lim_{N\to\infty}\bigg(\frac{q(u_0^N)}{\bar{q}^N}u_i^N\log\frac{q(u_0^N)u_i^N}{\bar{q}_i^N}
	- u_i^N\log\frac{u_i^N}{u_i^\infty}\bigg) = 0.
$$
Putting together the previous limits, we have proved \eqref{3.aux3}.
\end{proof}

%%%%%%%%%%%%%%%%%%%%%%%%

\subsection{Convergence of $h^*$}

We conclude from Lemmas \ref{lem.convf} and \ref{lem.key} that
$\lim_{N\to\infty}h_1^*(u^N|u^\infty) = 0$. We claim that also $h_2^*$ and $h_3^*$ converge to zero as $N\to\infty$. Since $u_i^N$ and $u_i^\infty$ are bounded in $[0,1]$, we have the estimate
\cite[Lemma 16]{HJT22}
$$
  \frac12\sum_{i=1}^n(u_i^N-u_i^\infty)^2
	\le \sum_{i=1}^n\bigg(u_i^N\log\frac{u_i^N}{u_i^\infty} - (u_i^N-u_i^\infty)\bigg)
	= h_1^*(u^N|u^\infty)\to 0,
$$
showing that $u_i^N\to u_i^\infty$ a.e.\ in $\Omega\times(0,1]$ as $N\to\infty$ for $i=1,\ldots,n$. We deduce from the continuity of $\chi$ that also $\lim_{N\to\infty}h_3^*(u^N|u^\infty)=0$.

For the limit of $h_2^*$, we observe that $u_0^N=1-\sum_{i=1}^n u_i^N\to u_0^\infty$ a.e. Hence, for any fixed $(x,s)\in\Omega\times(0,1]$, there exists $N_0\in\N$ such that
$1/2\le u_0(x,s+N)/u_0^\infty\le 3/2$ for $N>N_0$. Next, we write $h_2^*$ as
$$
  h_2^*(u^N|u^\infty) = \int_{u_0^\infty}^{u_0^N}\log\frac{q(s)}{q(u_0^\infty)}ds
	= u_0^\infty\int_1^{u_0^N/u_0^\infty}\log\frac{q(u_0^\infty\sigma)}{q(u_0^\infty)}d\sigma.
$$
Since the integrand is a function in $L^1(1/2,3/2)$, it follows from the absolute continuity of the integral that $\lim_{N\to\infty}h_2^*(u^N|u^\infty)=0$ a.e.\ in $\Omega\times(0,1]$. By definition of $h^*$, we have proved that $\lim_{N\to\infty}h^*(u^N|u^\infty)=0$.

%%%%%%%%%%%%%%%%%%%

\subsection{Convergence in $L^p(\Omega)$}

We deduce from the relative entropy inequality \eqref{3.ei} that $t\mapsto \int_\Omega h^*(u(t)|u^\infty)dx$ is bounded and nonincreasing. Then it follows from the limit $\lim_{N\to\infty}h^*(u^N|u^\infty)=0$ that in fact we have the convergence for all sequences $t\to\infty$,
$\lim_{t\to \infty}\int_\Omega h^*(u(t)|u^\infty)dx=0$ and in particular, since $h_2^*\ge 0$ and $h_3^*\ge 0$,
$$
  \lim_{t\to \infty}\int_\Omega h_1^*(u(t)|u^\infty)dx=0.
$$
Using \cite[Lemma 16]{HJT22} again, we have
$$
  \lim_{N\to\infty}\frac12\sum_{i=1}^n\int_\Omega(u_i(t)-u_i^\infty)^2 dx
	\le \lim_{N\to\infty}\int_\Omega h_1^*(u(t)|u^\infty)dx=0.
$$
The convergence in $L^p(\Omega)$ for any $p<\infty$ then follows from the uniform bound
for $(u_i(t))_{t>0}$, finishing the proof.

\begin{remark}[Drift terms]\label{rem.drift}\rm
Equations \eqref{1.eq2} with drift terms read as
$$
  \pa_t u_i = D_i\diver\bigg\{u_ip_i(u)q(u_0)\na\bigg(\log\frac{u_ip_i(u)}{q(u_0)}
	+ \Phi_i\bigg)\bigg\}, \quad i=1,\ldots,n,
$$
where $\Phi_i=\Phi_i(x)$ are given (electric or environmental) potentials. Adding the associated energy to the entropy density \eqref{1.h},
$$
  h_2(u) = \sum_{i=1}^n(u_i(\log u_i-1)+1) + \int_1^{u_0}\log q(s)ds + \chi(u)
	+ \sum_{i=1}^n u_i\Phi_i,
$$
we can compute (formally) the entropy inequality, giving
$$
  \frac{d}{dt}\int_\Omega h_2(u)dx + \int_\Omega\sum_{i=1}^n D_iu_ip_i(u)q(u_0)
	\bigg|\na\bigg(\log\frac{u_ip_i(u)}{q(u_0)} + \Phi_i\bigg)\bigg|^2 dx = 0.
$$
It was shown in \cite[Section 3.2]{ZaJu17} that the entropy production term with $\Phi_i=0$ can be bounded from below by $p_i(u)(q(u_0)\sum_{i=1}^n|\na\sqrt{u_i}|^2+|\na\sqrt{q(u_0)}|^2)$. Such an estimate seems to be impossible in the presence of $\na\Phi_i$. Indeed, the entropy inequality shows that
\begin{align*}
  4\int_0^\infty&\int_\Omega q(u_0)^2e^{-\Phi_i}
	\bigg|\na\bigg(\frac{u_ip_i(u)e^{\Phi_i}}{q(u_0)}\bigg)^{1/2}
	\bigg|^2 dx \\
	&= \int_0^\infty\int_\Omega u_ip_i(u)q(u_0)\bigg|\na\bigg(\log\frac{u_ip_i(u)}{q(u_0)}
	+ \Phi_i\bigg)\bigg|^2< \infty.
\end{align*}
Thus, in the special case $q(0)>0$ and if $\Phi_i$ is bounded from above,
we conclude the existence of a subsequence $t_k\to \infty$ such that $\na(u_ip_i(u)e^{\Phi_i}/q(u_0))^{1/2}(t_k)\to 0$ strongly in $L^2(\Omega)$ as $k\to\infty$, and one may proceed similarly as in \cite[Section 5]{BFS14}. However, the condition $q(0)=0$ is needed to model correctly the transition rate of nonoccupied cells in the lattice model \cite{BDPS10,ZaJu17}.
\qed\end{remark}

%%%%%%%%%%%%%%%%%%%%%%%%%%%%%%%%%%%%%%%%%%%%%%%%%%%%%%%%%%%%%%%

\end{document}